\title{Strong multiplicity one for the Selberg class  }
\author{Michael Farmer}
\newtheorem{thm}{Theorem}[section]
\newtheorem{lem}[thm]{Lemma}
\theoremstyle{definition}
\newtheorem{defn}{Definition}[section]
\theoremstyle{remark}
\newcommand{\h}{h_{\mu}}
\newcommand{\g}{g_{\mu}}
\newcommand{\C}{C_{\mu}} 
\newcommand{\E}{E_{\mu}}
\newcommand{\Lb}{ \mathcal{L}}
\begin{document}

\maketitle

\begin{abstract}
    We study the problem of determining elements of the Selberg class by information on the coefficents of the Dirichlet series at the squares of primes, or information about the zeroes of the functions. 
\end{abstract}
\section{Introduction}
\subsection{The Selberg Class}
In \cite{Selberg}, Selberg axiomatised the expected properties of $L$ functions and introduced the "Selberg class". This is expected to coincide with the class of all arithmetically interesting $L$ functions. 
\begin{defn}
The Selberg class is the set of functions $F$ satisfying the following axioms:
\end{defn}
\begin{itemize}
    \item  In the half plane $\Re(s)>1$, the function $F(s)$ is given by a Dirichlet series $ \sum_{m=1}^{\infty} a_F(m) m^{-s}$, where $a_F(1)=1$  and $a_F(m) \ll_{\varepsilon} m^{\varepsilon}  $ for any $\varepsilon>0$. The estimate $a_F(m) \ll_{\varepsilon} m^{\varepsilon}  $ is known as the Ramunujan hypothesis. 
    \item  There exists a natural number $m_F$ such that $(s-1)^{m_F} F(s)$ extends to an analytic function of finite order in the entire complex plane. 
    \item There exists $Q_F>0$ , $\lambda_j(F)>0 $,  $\mu_j (F),  w_F \in \mathbb{C}$   with   $\Re(\mu_j(F))\geq 0$ and $|w_F|=1$,  such that the function $\Phi_F(s)= Q_F^s \Gamma_F(s) F(s)$, where 
    \[           \Gamma_F(s)= \prod_{j=1}^{N_F} \Gamma \left( \lambda_j (F)s + \mu_j(F)   \right) ,                \]
    satisfies the functional equation $ \Phi_F (s)= w_F \overline{ \Phi_F} (1-s) $. Here we use the notation that for any function $f$, $\overline{f} (s) = \overline{ f( \overline{s})}$. Recall that the degree of $F$ is defined to be the $ \sum_{j=1}^{N_f} 2 \lambda_j (F)$.
    \item   $F$ satisfies an Euler product formula. We can express $\log (F(s)) $ as a Dirichlet series 
    \[     \log (F(s))  = \sum_{m=2}^{\infty}     \frac{b_F(m) \Lambda(m)}{ m^s \log m}            \]
    where $b_F(m) \ll m^{\theta}$ for some $\theta<1/2$. We adopt the convention that $ b_F(m)$  is defined to be zero when $m$ is not a prime power. 
\end{itemize}
\subsection{Previous results}
It is generally believed that the Selberg class satisfies the "strong multiplicity one" principle. This states that if $F,G$ are two elements of the Selberg class with $a_F(p)=a_G(p)$ (equivalently $b_F(p)=b_G(p)$) for all but finitely many primes $p$, then $F=G$. This result is not known in this generality and historically more assumptions have been made to establish similar results. In \cite{Perelli},   Kaczorowski and  Perelli established this under an assumption on the coefficients at the squares of primes, namely that $ \left| a_F(p^2)- a_G(p^2) \right| $ is bounded on average, so for all large $x$, 
\[   \sum_{p \leq e^x} \frac{ \left| a_F(p^2)- a_G(p^2) \right| ^2 }{p} \log p \ll x  .     \]

In \cite{Sound}, Soundararajan  improved on this result by weakening the assumption to 
\[   \sum_{p \leq e^x} \frac{ \left| a_F(p^2)- a_G(p^2) \right| ^2 }{p} \log p \ll \exp \left(  \frac{x}{ \log x \left(  \log_2 x  \right)^5}     \right)   ,               \]
where we adopt the convention that $\log_2 x= \log \log x$.
In this paper, we weaken this assumption again by replacing the $ (\log_2 x)^{-5}$ term by $\varepsilon$.  We also establish a similar result assuming no  information on the coefficients at squares of primes, but instead use information on the zeroes of $F$ and $G$.  One could prove a more general result interpolating between the two cases, but we do not do this.  \\

\subsection{Statement of Results}
Write the (non-trivial) zeroes of $F$, $\rho_F$,  as $1/2+i \gamma_F$, where $\gamma_F \in \mathbb{C}$. Let $Z_F (T)$ denote the multiset of $\gamma_F \in \mathbb{R}$, with $|\gamma_F|\leq T$. Recall the symmetric difference notation for (multi)sets $A,B$,
\[       A  \bigtriangleup  B = ( A\backslash B) \cup (B\backslash A)        . \]
\begin{thm}
Suppose F and G are elements of the Selberg class with $a_F(p)=a_G(p)$  (equivalently $b_F(p)=b_G(p)$) for all $p \notin E $, where $E$ is a thin set of primes in the sense that
  \begin{equation}{\label{primes of E}}
      \# \{ p \in E: p\leq x \}  \ll x^{1/2 -\delta}
  \end{equation} 
  for some fixed $\delta>0$.  Suppose either

  \begin{equation}{\label{coinciding zeroes}}
   \left|   Z_F(T) \bigtriangleup Z_G(T) \right| = O \left(T  \frac{ \log T}{ \log_2 T }  \right) ,     \tag{i}
  \end{equation}
 or 
\begin{equation}{\label{Case 2}}
\sum_{p \leq e^x} \frac{ \left| a_F(p^2)- a_G(p^2) \right| ^2 }{p} \log p \ll_{\varepsilon} \exp \left( \frac{ \varepsilon x}{\log x}  \right), \; \; \; \text{for all} \; \;  \varepsilon>0. \tag{ii}
    \end{equation}               
   Then $F=G$.

\end{thm}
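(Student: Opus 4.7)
My plan is to analyse the logarithmic difference
\[
D(s) \,:=\, \log F(s) - \log G(s) \,=\, \sum_{m\ge 2} \frac{(b_F(m)-b_G(m))\Lambda(m)}{m^s\log m},
\]
which is holomorphic in $\Re s > 1$. The hypothesis $a_F(p)=a_G(p)$ for $p\notin E$ makes the linear-in-$p$ part of $D$ supported on $E$, so by \eqref{primes of E} and the Ramanujan axiom it converges absolutely in $\Re s > \tfrac12 - \tfrac{\delta}{2}$. The Ramanujan bound also renders the tail $k\ge 3$ absolutely convergent down to $\Re s = \tfrac12$. The only obstruction to continuing $D$ analytically to $\Re s > \tfrac12$ with controlled size therefore lies in the $k=2$ contribution
\[
D_2(s) \,=\, \tfrac{1}{2}\sum_p \frac{a_F(p^2)-a_G(p^2)}{p^{2s}},
\]
where I have used that $b_F(p^2)-b_G(p^2) = a_F(p^2)-a_G(p^2)$ whenever $a_F(p)=a_G(p)$. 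Both cases reduce to showing $D_2 \equiv 0$.

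For case (ii), I would follow the overall scheme of \cite{Sound}, assuming $F\ne G$ for contradiction. A Perron-type contour shift converts the nontrivial presence of $D_2$ into a lower bound for a smoothly weighted partial sum of $|a_F(p^2)-a_G(p^2)|^2 (\log p)/p$ over $p\le e^x$, while the hypothesis \eqref{Case 2} supplies a matching upper bound. To improve the $(\log_2 x)^{-5}$ factor of \cite{Sound} to $\varepsilon$, I would replace Soundararajan's pointwise zero-density input by a mean-square estimate of Montgomery--Vaughan type for the Dirichlet polynomial formed from $D_2$, and use a smoother Perron weight so that the truncation error is sub-exponential in $x/\log x$. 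These sharpenings should compress the multiplicative loss from $(\log_2 x)^5$ to $\exp(O(\varepsilon x/\log x))$, which is exactly the budget allowed by \eqref{Case 2}.

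For case (i), I would reduce to case (ii) via Weil's explicit formula. Choose a nonnegative Paley--Wiener test function $h$ with $\widehat h$ nonnegative and compactly supported, rescale $h$ by a parameter $T_0 \le T$, and subtract the formula for $F$ from that for $G$. The spectral side becomes $\sum_{\gamma \in Z_F(T)\triangle Z_G(T)} \pm h(\gamma/T_0)$ plus contributions from off-line zeros; by \eqref{coinciding zeroes} and on-average zero-density bounds for the Selberg class this is $O((T/T_0)\log T/\log_2 T)$. The arithmetic side is essentially a Dirichlet polynomial in $(b_F(p^k)-b_G(p^k))/p^{k/2}$ supported on $k\log p \ll T_0$. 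Optimising jointly in $T_0$ and $T$ yields a bound of precisely the shape required in \eqref{Case 2}, reducing case (i) to case (ii).

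The main obstacle will be the quantitative refinement in case (ii). Soundararajan's $(\log_2 x)^5$ factor accumulates several small logarithmic losses (Perron smoothing error, pointwise zero count, a final extremal inequality), and driving each to essentially optimal shape while keeping the combined loss below $\varepsilon$ requires delicate bookkeeping; the true bottleneck is the mean-square estimate for the Dirichlet polynomial attached to $a_F(p^2)-a_G(p^2)$, where the available large sieve must be deployed in its sharpest form. Case (i) is comparatively clean once the off-line zeros are controlled on average, which is standard for Selberg-class functions.
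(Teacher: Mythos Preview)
Your overall architecture (analyse $\log F-\log G$, isolate the $k=2$ contribution, feed an explicit formula with a mean-value estimate of Montgomery--Vaughan type) matches the paper's, and your treatment of the $k=1$ and $k\ge 3$ parts is essentially what the paper does in its preliminary section. However, two points deserve correction.

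First, your proposed mechanism for the improvement in case (ii) misidentifies the bottleneck. Soundararajan already uses the Montgomery--Vaughan mean-value theorem in exactly the form you describe, so ``replacing pointwise zero-density input by a mean-square estimate'' and ``deploying the large sieve in its sharpest form'' cannot be where the gain comes from. The paper's actual innovation is orthogonal to the sieve: it replaces Soundararajan's fixed $C^\infty$ weight by the one-parameter family $g_\mu(x)=c_\mu(1-x^2)^{\mu-1/2}$, whose transform $h_\mu$ involves $J_\mu$ and satisfies $|h_\mu(t)|\le C_\mu |t|^{-\mu}$. The smoothness parameter $\mu$ is then chosen to grow like $\log\rho(T)$ (with $\rho(T)=\log T$ in case (ii)), and it is this extra optimisation --- trading off the support length $L$, the decay exponent $\mu$, and a cutoff $W$ --- that compresses the $(\log_2 x)^{5}$ loss down to an arbitrary $\varepsilon$. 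Your phrase ``smoother Perron weight'' gestures in the right direction but does not capture that $\mu$ must be a free parameter tied to $T$; a single fixed smoother weight would not suffice.

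Second, your plan for case (i) --- deduce an inequality of the shape \eqref{Case 2} from \eqref{coinciding zeroes} and then invoke case (ii) --- does not work as stated. The explicit formula with a single test function bounds a \emph{linear} functional of the $c(p^k)$ by the zero count, and there is no general way to upgrade this to the $L^2$ bound in \eqref{Case 2}. The paper does \emph{not} reduce (i) to (ii); instead it runs a single argument that covers both cases simultaneously through the device $\rho(T)=\log T/\log_2 T$ under (i) and $\rho(T)=\log T$ otherwise, together with $A_\varepsilon(L)=e^{\varepsilon L}$ (the trivial Ramanujan bound) under (i) and $A_\varepsilon(L)=\exp(\varepsilon L/\log L)$ under (ii). In case (i) the smaller $\rho(T)$ permits a shorter Dirichlet-polynomial length $L\asymp\log T$, so even the trivial bound $A_\varepsilon(L)=T^{\varepsilon}$ is acceptable; no control on $a_F(p^2)-a_G(p^2)$ beyond Ramanujan is ever used. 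Finally, the paper does not ``reduce to $D_2\equiv 0$'': it assumes some $c(m)\neq 0$, multiplies the explicit-formula identity by $m^{it}$, and integrates over $t\in[T,2T]$ to isolate that single coefficient, obtaining $1\ll W^{-1/2}$ for $W$ arbitrarily large.
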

Our method of proof is similar to that of \cite{Sound}, but we can improve on the results by not choosing a smooth function in the explicit formula, but allowing the smoothness to be a parameter we can also control. This extra degree of freedom allows us to improve the result, but falls short of the conjectured result. \\

 The proof of both cases will be similar, but to make it more concise we shall define the following function. 
 \begin{equation*}
     \rho(T) =    \begin{cases}
      \frac{ \log T}{ \log_2 T },    & \text{if (i) holds } \\
      \log T, & \text{otherwise}. 
  \end{cases}      \end{equation*} 
  This will arise in the proof from the density of the zeroes of $F,G$. 
\section{preliminaries}{\label{background material}}
 
We shall need the following unconditional estimate which only relies on the Ramunujan hypothesis (first axiom for Selberg class). For any $\varepsilon>0$, we have 
\begin{equation}{\label{trivial bound}}
    \sum_{p \leq e^x} \frac{ \left| a_F(p^2)- a_G(p^2) \right| ^2 }{p} \log p \ll_{\varepsilon} e^{  \varepsilon x} .
\end{equation}
Let \begin{equation*}{\label{formula for A}}
    A_{\varepsilon} (x) = \begin{cases}
    \exp \left(  \frac{ \varepsilon x}{\log x}  \right),      & \text{if (ii) holds}  \\
    \exp \left( \varepsilon  x \right)  ,    & \text{otherwise}. \\
  \end{cases}      \end{equation*}

We will show the that (non-trivial) zeroes and poles of $F$ and $G$ coincide except on the critical line $\Re(s)=1/2$ when $F$ and $G$ satisfy (\ref{primes of E}). Write $c(n)= b_F(n)- b_G(n)$ and consider the function $ - F'/F (s)+ G'/G (s)$. For $\Re(s)>3/2$, the Euler product implies $F$ and $G$ have no zeroes , so the function is well defined in this region. For $\Re(s)>3/2$,
\begin{equation}{\label{log derivative}}
    - \frac{F'}{F} (s)+ \frac{G'}{G} (s) = \sum_{m=1}^{\infty} \frac{c(m) \Lambda(m)}{m^s}= \sum_{k=1}^{\infty} \sum_{p} \frac{ c(p^k) \log(p)}{p^{ks}} .
\end{equation}              
 For $k=1$, $c(p)=0$ unless $p \in E$. Fix arbitrary $\varepsilon>0$. Then for all  $s$ with $\Re(s)>1/2 - \delta+ 2 \varepsilon$ we have $ \Re(s)-\varepsilon> 1/2 - \delta+\varepsilon$. Using $c(p), \log p \ll_{\varepsilon} p^{\varepsilon}$ implies 
\[ \sum_{p \leq x} \frac{c(p) \log p}{p^s} \ll \sum_{p \leq x} \frac{\mathbbm{1}_{E} (p)   }{p^{\Re(s)- \varepsilon}}           .             \]
Summing by parts and using (\ref{primes of E}), the RHS converges uniformly as $x$ goes to infinity . Hence $\sum_{p} c(p) \log p/p^s$ is analytic for $\Re(s)>1/2- \delta$. Recall $c(m)\ll m^{\theta}$ for some fixed $\theta<1/2$. Fix an integer $M$ such that $ M (1/2 - \theta)>1$. Then 
\[  \sum_{k=2}^{\infty} \sum_{p} \frac{c(p^k) \log p }{p^{ks}} = \sum_{p} \left( \sum_{k=2}^{M-1} \frac{c(p^k) \log p }{p^{ks}} + \sum_{k=M}^{\infty} \frac{c(p^k) \log p }{p^{ks}}               \right)       \]
The first term is analytic for $\Re(s)>1/2$ since $c(p^k) \ll_{\varepsilon} p^{\varepsilon}$ for $ 2 \leq k \leq M-1$.   For  $\Re(s)>1/2$,
\begin{equation*}
    \begin{split}
    \sum_{p}\sum_{k=M}^{\infty} \frac{c(p^k) \log p }{p^{ks}}  \ll \sum_{p}
    \sum_{k=M}^{\infty} \frac{ \log p }{p^{k (\Re(s)-\theta)}} 
    \ll \sum_{p} \frac{\log p}{p^{M(\Re(s)- \theta)}} 
    \leq \sum_{p} \frac{\log p}{p^{M(\frac{1}{2}- \theta)} }=O(1)  
\end{split}
\end{equation*}
so  $\sum_{k=4}^{\infty} \sum_{p} c(p^k) \log p/ p^{ks}$ is analytic for $\Re(s)>1/2$. Putting this all together, the RHS of (\ref{log derivative}) is analytic for  $\Re(s)>1/2$ which allows us to analytically continue the LHS to $\Re(s)>1/2$. It follows that the zeroes and poles of $F$ and $G$ in this region agree (including multiplicities). Repeating the argument for $ \overline{F}(s)= \sum_{n=1}^{\infty} \overline{a_n} n^{-s} $ and $\overline{G}(s)$ shows that their poles and zeroes also coincide for $\Re(s)>1/2$. Putting this together with the functional equation shows that $\Phi_F$ and $\Phi_G$ have the same zeroes except for possibly on  $\Re(s)=1/2$ and poles of the same multiplicity at $s=1$.  \\ 

Let $T$ be a parameter which shall be taken to infinity. A standard application of the argument principle implies that the  (non-trivial) zeroes of $F$, $\rho_F$, satisfy the following estimate
\begin{equation}{\label{number of zeroes of F and G seperately}}
     \# \{ \rho_F : | \Im(\rho_F)  | \leq T                   \}= \frac{d_F}{\pi} T \log T + C_F T + O_F (\log T)
\end{equation}
where $C_F$ is a constant and $d_F$ is the degree of $F$.

 Let $\mu \geq 3$ be a  real parameter to be chosen later (it will be chosen so $ \mu \asymp \log\rho(T) $). For $x \in \mathbb{R}$, define 
\begin{equation*}
    \g (x)= \frac{2^{2\mu}}{\pi \sqrt{\mu} \binom{2\mu}{\mu}  }
    \begin{cases}
      (1-x^2)^{\mu-1/2}, & \text{if}\ |x|\leq 1\\
      0, & \text{otherwise}  
    \end{cases}
  \end{equation*}
  The function $\g$ has Fourier transform 
  \begin{equation*}
      \h (t)=  \frac{ \Gamma(\mu+1)}{ \sqrt{\mu} }  J_{\mu} ( |t|) \left( \frac{2}{ |t|} \right)^{\mu}
  \end{equation*}
  where $J_{\mu} (t)$ is the Bessel function of the first kind. This is a solution to the Bessel differential equation 
  \[ t^2 \frac{d^2 y}{dt^2}  + t \frac{dy}{dt} + (t^2-\mu^2) y =0    .      \] 
  
  By \cite{Bessel} we have  $ J_{\mu} (|t|) = O(1) $ uniformly for $\mu \geq 3$, $t \in \mathbb{R}$. Let us write $\C =  \Gamma(\mu+1) 2^{\mu} / \sqrt{\mu}$. We call the estimate 
  \begin{equation}{\label{asymp}}
      |\h (t)| \leq \C \left(  \frac{1}{|t|} \right)^{\mu}
  \end{equation}
  the asymptotic estimate for $\h$. 
  
  An important property of $\g$ we shall use repeatedly throughout the paper is the following bound. By Stirling's formula, for any $x \in \mathbb{R}$,
  \begin{equation}{\label{bound for g_n}}
      \g (x) \leq \g(0) \asymp 1 .
  \end{equation}
  Also, we can recover $\g$ from $\h$ by means of the Fourier inversion formula 
  \[      \g(x)= \frac{1}{2 \pi} \int_{-\infty}^{\infty} \h (t) e^{ -ix t} \, dt .             \]
  \\
Consider  a $C^2$ function $u$ which is compactly supported with Fourier transform $v$.  Recall we write the (non-trivial) zeroes of $F$, $\rho_F$ as $ 1/2 + i \gamma_F$. Approximating $C^2$ functions by smooth functions we can use \cite[Proposition 2.1]{Rand} to get the following explicit formula which relates  the (non-trivial) zeroes of $F$ to $b_F(m) \Lambda(m)$ at prime powers.
  \begin{equation}{\label{explicit formula original}}
    \begin{split}
        \sum_{\gamma_F} v( \gamma_F) &= m_f \left(  v \left( \frac{-i}{2} \right) + v  \left( \frac{i}{2}  \right)
    \right) \\
    &+ \frac{1}{2 \pi} \int_{-\infty}^{\infty} v(r) \left( 2 \log Q_F + \frac{\Gamma'_F}{\Gamma_F} \left( \frac{1}{2}+ir \right) +  \frac{\overline{\Gamma'_F}}{\overline{\Gamma_F}} \left( \frac{1}{2}- ir \right)     \right) dr  \\
    & - \sum_{m=1}^{\infty} \left( \frac{b_F(m) \Lambda(m)}{ \sqrt{m}} u \left( \log m \right)  + \frac{\overline{b_F(m)} \Lambda(m)}{ \sqrt{m}} u \left(- \log m \right)   \right),
     \end{split}
  \end{equation}
  where we sum over non-trivial zeroes of $F$ with multiplicity. \\
   Let $t \in [T,2 T]$. Let $L$ be another parameter depending on $T$ (which will chosen so  $L \asymp \rho(T) \log \rho(T)$. Notice that $\g$ is not a smooth function, but for $\mu \geq 3$, $\g$ is $C^2$. Hence by considering $v(r)= \h ( L (r-t))$ and using the Fourier inversion formula we deduce
  \begin{equation}{\label{shifted explicit formula}}
       \begin{split}
        \sum_{\gamma_F} \h( L(\gamma_f-t)) ) &= m_f \left(  \h \left(  L \left( \frac{-i}{2} -t \right) \right) + \h \left( L\left( \frac{i}{2} +t  \right) \right) \right) \\
    &+ \frac{1}{2 \pi} \int_{-\infty}^{\infty} \h ( L(r-t))  \left( 2 \log Q_F + \frac{\Gamma'_F}{\Gamma_F} \left( \frac{1}{2}+ir \right) + \frac{\overline{\Gamma'_F}}{\overline{\Gamma_F}} \left( \frac{1}{2}- ir \right)     \right) dr  \\
    & - \frac{1}{L} \sum_{m=1}^{\infty} \left( \frac{b_F(m) \Lambda(m)}{ m^{1/2 +it} } \g \left( \frac{\log m}{L}   \right)  + \frac{\overline{b_F(m)} \Lambda(m)}{ m^{1/2 -it}} \g \left(  \frac{- \log m}{L} \right) \right).
     \end{split}
        \end{equation}
 Denote the middle and third terms by  $ H_F (t,L,\mu), D_F(t,L.\mu)$  respectively. We shall use the following lemma to be  proven at the end. 
 \begin{lem}{\label{lem for H}}
  For our choice of $\mu$ and $L$, 
  \begin{equation}{\label{formula for H}} 
  H_F (t,L,\mu)= \g(0)  \frac{d_F \log T}{L} +  \frac{ O \left(  E_{\mu} (T) \right) }{L}  
 \end{equation}
 where 
 \[     E_{\mu} (T) =              \frac{C_{\mu}}{T}    + 1    \; \; \; \text{and} \; \;  \; C_{\mu}= \Gamma(\mu+1) 2^{\mu}/\sqrt{\mu} .                 \]
 \end{lem}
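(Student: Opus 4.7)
The plan is to substitute $u = L(r-t)$ in the integral defining $H_F$, obtaining
\[ H_F(t,L,\mu) = \frac{1}{2\pi L}\int_{-\infty}^{\infty}\h(u)\left(2\log Q_F + \frac{\Gamma'_F}{\Gamma_F}\!\left(\tfrac12 + ir\right) + \frac{\overline{\Gamma'_F}}{\overline{\Gamma_F}}\!\left(\tfrac12 - ir\right)\right)du, \]
with $r = t + u/L$. The strategy is to replace the bracketed factor by its asymptotic $d_F \log|r|$, replace $\log|r|$ by $\log T$, and then collapse what remains via Fourier inversion into the advertised main term $\g(0)\,d_F\log T/L$.

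To justify the first step, write $\Gamma_F(s) = \prod_j \Gamma(\lambda_j s + \mu_j)$ and apply the Stirling asymptotic $\Gamma'/\Gamma(z) = \log z + O(1/|z|)$, which is valid uniformly because $\Re(\lambda_j(1/2+ir)+\mu_j)\geq \lambda_j/2 > 0$. Using that $(\overline{\Gamma'_F}/\overline{\Gamma_F})(1/2-ir)$ is the complex conjugate of $(\Gamma'_F/\Gamma_F)(1/2+ir)$, and that $\sum_j 2\lambda_j = d_F$, one obtains for real $r$ with $|r|$ large
\[ 2\log Q_F + \frac{\Gamma'_F}{\Gamma_F}\!\left(\tfrac12 + ir\right) + \frac{\overline{\Gamma'_F}}{\overline{\Gamma_F}}\!\left(\tfrac12 - ir\right) = d_F \log|r| + c_F + O(1/|r|), \]
where $c_F$ is an $F$-dependent constant.

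I would then split the $u$-integration into the main region $|u|\leq LT/2$ (where $|r|\geq T/2$, so Stirling applies and $\log|r| = \log T + O(1)$) and the tail $|u| > LT/2$. In the main region, the dominant contribution is
\[ \frac{d_F \log T}{2\pi L}\int_{|u|\leq LT/2}\h(u)\,du = \frac{\g(0)\,d_F\log T}{L} + O\!\left(\frac{d_F\log T}{L}\cdot\frac{\C}{(LT)^{\mu-1}}\right), \]
using the Fourier inversion identity $\int_{-\infty}^{\infty}\h(u)\,du = 2\pi\g(0)$ for the main term and the asymptotic estimate (\ref{asymp}) on the complement. The residual contributions from $c_F$, from $\log|r|-\log T = O(1)$, and from the $O(1/|r|)$ Stirling remainder each integrate against $\h$ to give errors of order $O(1)/L$ and $O(1/(TL))$. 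For the tail $|u|>LT/2$, combining the crude bound $|\Gamma'_F/\Gamma_F(1/2+ir)|\ll_F \log(|r|+2)$ with $|\h(u)|\leq \C/|u|^\mu$ yields a contribution of order $\C \log T/((LT)^{\mu-1} L)$.

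The main technical obstacle is verifying that this tail contribution, together with the tail error from truncating Fourier inversion, indeed fits into the claimed bound $O(\E(T))/L = O(\C/T + 1)/L$: this reduces to checking that $(LT)^{\mu-1}\gg T\log T$, which for the intended parameters $\mu \asymp \log\rho(T)$ and $L \asymp \rho(T)\log\rho(T)$ holds comfortably for $T$ large. Collecting all contributions then yields the stated identity.
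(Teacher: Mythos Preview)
Your approach is correct and follows the same overall strategy as the paper: apply Stirling to the digamma terms, extract the main term $\g(0)\,d_F\log T/L$ via Fourier inversion, and control tails using the asymptotic bound $|\h(u)|\leq \C/|u|^{\mu}$. The organization differs: the paper splits twice, handling the Stirling remainder $O(1/l_j(r))$ with a cut at $|u|=\mu^{1/4}$ (using the pointwise bound $|\h|\leq 1/\sqrt{\mu}$ near the origin and treating the neighbourhood of $r=0$ separately) and the logarithmic term with a cut at $|r-t|=\sqrt{T}$, whereas you use a single cut at $|u|=LT/2$. Your cleaner route works, but the step ``$\log|r|-\log T=O(1)$ integrates against $\h$ to give $O(1)/L$'' implicitly requires $\int_{-\infty}^{\infty}|\h(u)|\,du=O(1)$ uniformly in $\mu$. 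This is true (because $\h>0$ on $(-j_{\mu,1},j_{\mu,1})\supset(-\mu,\mu)$, and Stirling gives $\int_{|u|>\mu}|\h|\ll (2/e)^{\mu}\to 0$, so $\int|\h|\to 2\pi\g(0)$), but it is not recorded in the paper and you should state and justify it explicitly; the paper's finer splitting is precisely what allows it to avoid invoking this $L^1$ bound.
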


 \section{ Proof of theorem}
By our observations on the zeroes of $F,G$ above,  we have
\begin{equation}{\label{relationship}}
     Z_F (t,L,\mu)- Z_G (t,L,\mu) =    H_F (t,L,\mu)-    H_G  (t,L,\mu) - D_F(t,L,\mu) +    D_G(t,L,\mu)
 \end{equation}                      
  where  \[ Z_F(t,L,\mu) =   \sum_{\gamma_F \in \mathbb{R}} \h( L(\gamma_f-t)) )   .  \]
  We now want to record an estimate for $D_F(t,L, \mu)- D_G (t,L, \mu)$.  Choose $\varepsilon>0$ such that $1/2 - \varepsilon> 1/2 - \delta$, where $\delta$ is from (\ref{primes of E}). Since $  c(p)  \ll_{\varepsilon} p^{\varepsilon}$ and using (\ref{bound for g_n}), (for the rest of the paper we shall do this without mention), we have 
  \begin{equation*}
  \begin{split}
      \sum_{p} \frac{\left| c(p)    \right| \g(\log p/L) \log p} { \sqrt{p}}&\ll     \sum_{p \in E} \frac{1}{p^{1/2- \varepsilon}} 
      \ll 1 .
     \end{split}
  \end{equation*} 
  Since $ c(m) \ll m^{\theta}$ we see that 
  \[    \sum_{k \geq 3}  \sum_{ p } \frac{\left|c(p^k)  \right| \g(k \log p/L) \log p  }{p^{k/2}} \ll 1    .      \]
  By considering the logarithmic derivative of $F$, it is easy to see show that $b_F(p^2)=a_F(p^2) - a_F(p)b_F(p)/2$. Noticing that again by (\ref{primes of E})
  \[     \sum_{p} \frac{\left| a_F(p) b_F(p)- a_G(p) b_G(p)    \right| \g(2\log p/L) \log p }{p}\ll  1   ,              \]
 we have that
  \begin{equation*}
  \begin{split}
  L( D_F(t,L,\mu)- D_G(t,L,\mu))\ll 1 &+ \left|   \sum_{p} \frac{ a_F(p^2) - a_G(p^2)}{p^{1+2it}} \g\left(\frac{2\log p}{L} \right) \log p   \right| \\
  &+  \left|   \sum_{p} \frac{\overline{ a_F(p^2)} - \overline{a_G(p^2)}}{p^{1-2it}} \g\left(\frac{-2\log p}{L} \right) \log p   \right|   . 
  \end{split}
       \end{equation*}
       Using a mean-value estimate from Montgomery and Vaughan \cite[Corollary 3]{mean}, and using the fact $\g$ is compactly supported on $[-1,1]$, we have that 
       \begin{equation*}
       \begin{split}
         \int_{T}^{2T}  \left|   \sum_{p} \frac{ a_F(p^2) - a_G(p^2)}{p^{1+2it}} \g\left(\frac{2\log p}{L} \right) \log p   \right|^2 dt& \ll  \sum_{p \leq e^{L/2}} \frac{  \left| a_F(p^2) - a_G(p^2)   \right|^2}{p^2} (T+p) \log^2p \\
         &\ll    T + \sum_{p \leq e^{L/2}}   \frac{  \left| a_F(p^2) - a_G(p^2)   \right|^2}{p}  \log^2p           \\
         &\ll  T + O_{\epsilon} \left(\frac{L}{2} A_{\varepsilon} (L/2) \right)   , 
       \end{split}
         \end{equation*}
         where the last line comes from (\ref{trivial bound}). Dealing with the cross terms using Cauchy's inequality, we deduce that 
         \begin{equation}{\label{formula for D}}
             \int_{T}^{2T} \left( L( D_F(t,L,\mu)- D_G(t,L,\mu))  \right)^2 dt  \ll   T + O_{\varepsilon} \left(  A_{\varepsilon} (L)  \right).
         \end{equation}
         \\
 Let $W\geq 1$ be a large constant to be chosen later. Let $\Lb=\Lb(W)$ denote the set of $ t \in [ T, 2T]$ such that there exists $ \gamma \in Z_F(\infty) \bigtriangleup Z_G(\infty)$  in  $  \left( t- \frac{1}{W \rho(T)},   t+ \frac{1}{W\rho(T)}    \right)    $. Notice in either case (i) or (ii),   (i) or (\ref{number of zeroes of F and G seperately})  implies  for large enough $T$,  $\text{Meas} (\Lb)\ll T \rho(2T+1)/W\rho(T) \ll T/W$. Let $\widetilde{\Lb}$ denote $[T,2T] \backslash \Lb$. Then 
 \begin{equation*} \label{eq1}
\begin{split}
& \; \; \; \; \; L \int_{t \in \widetilde{\Lb}} \left| Z_F(t,L,\mu)- Z_G(t,L,\mu) \right| \, dt \\
&= L \int_{T}^{2T} \Bigg| \sum_{\substack{   {\gamma_F \in \mathbb{R} } \\ 
{|\gamma_F-t| \geq 1/W\rho(T)} }} \h( L (\gamma_F-t)) 
 -  \sum_{\substack{  \gamma_G \in \mathbb{R} \\   
{|\gamma_G-t| \geq 1/W\rho(T)} }} \h( L (\gamma_G-t))  \Bigg| \, dt \\
&=  L \int_{T}^{2T} \left| \sum_{\substack{    
{|\gamma-t| \geq 1/W\rho(T)}\\ 
 { \gamma \in Z_F(\infty) \bigtriangleup Z_G(\infty)}} }      
  \pm \h( L (\gamma-t))    \right| \, dt. 
\end{split}
\end{equation*}
We deal now with the contribution of zeroes outside $(T-1, 2T+1)$.  If the distance of $\gamma$ from $[T,2T]$ exceeds $m$ then the contribution from any term on the RHS of the above equation is \\ $ \leq \mu^{-1/2} \Gamma(\mu+1) \left( 2/  Lm    \right)^{\mu}$ by (\ref{asymp}). Since there are $\ll \log(T(m+1))$ ordinates $\gamma$ whose distance is between $m$ and $m+1$ by (\ref{number of zeroes of F and G seperately}), the total contribution is
 $\ll \mu^{-1/2} \Gamma(\mu +1) \left(  2/  L   \right)^{\mu} \log(T)$. This is because $\sum_{m=1}^{\infty} m^{-\mu} \log(m+1)$ and $ \sum_{m=1}^{\infty} m^{-\mu} $ are $ O(1)$ independent of $\mu \geq 3$.  This means 

\begin{equation} \label{estimate on z 1}
\begin{split}
& \; \; \; \; \;L \int_{t \in \widetilde{\Lb}} \left| Z_F(t,L,\mu)- Z_G(t,L,\mu) \right| \, dt \\
& \ll \mu^{-1/2} \Gamma(\mu +1) \left(  2/ L   \right)^ {\mu}\log(T)  +    \sum_{\substack{   {\gamma \in (T-1,2T+1) } \\
 { \gamma \in Z_F(2T+1) \bigtriangleup Z_G(2T+1)}} } \int_{|y| \geq L/W \rho(T)} \left|       
 \h( y)    \right| \,  dy \\
  &\ll  \mu^{-1/2} \Gamma(\mu +1) \left(  2/ L   \right)^ {\mu}\log(T)  +   T \rho(T)  \int_{|y| \geq L/W \rho(T)} \left|       
 \h( y)    \right| \, dy, 
\end{split}
\end{equation}
where the last line comes from (\ref{number of zeroes of F and G seperately}).  \\ 

Suppose now that $F\neq G$, so there is an integer $m$ such that $c(m) \neq 0$. Using (\ref{estimate on z 1}) we see that 
\begin{equation}{\label{estimate on z 2}}
\begin{split}
   & \; \; \; \; \;L \int_{t \in \tilde{\Lb}} m^{it} \left(Z_F(t,L,\mu)- Z_G(t,L,\mu) \right) \,  dt \\
   &\ll \mu^{-1/2} \Gamma(\mu +1) \left(  2/ L   \right)^ {\mu}\log(T)   +   T \rho(T)  \int_{|y| \geq L/W \rho(T)} \left|       
 \h( y)    \right| \,  dy .
 \end{split}
\end{equation}
However using (\ref{relationship}) this is also equal to 
\begin{equation}{\label{H,Z,D}}
     L      \int_{t \in \tilde{\Lb}} m^{it}          \left( H_F (t,L,\mu)-    H_G  (t,L,\mu) - D_F(t,L,\mu) +    D_G(t,L,\mu)         \right) \, dt .
\end{equation}             
From \cite{Sound}, (\ref{primes of E}) tells us that $d_F= d_G$. Recall that for large $T$, $ \text{Meas}(\Lb)\ll T/W   $.  So for large $T$ we have 
\[ L \int_{t \in \Lb }    \left| H_F(t,L,\mu)  - H_G(t,L,\mu)   \right| \, dt \ll E_{\mu} (T) \text{Meas}(\Lb) \ll  E_{\mu} (T) \frac{T}{W}  .      \]
We will also need the following lemma proved at the end of the paper.
\begin{lem}{\label{lem for derivative term}}
Let $F,G$ lie in the Selberg class and satisfy (\ref{primes of E}). For our choice of $\mu, L$, we have the following estimate, 
 \begin{equation*}\begin{split}
     \int_{T}^{2T} L m^{it} \left( H_F (t,L,\mu) - H_G (t,L,\mu) \right) \, dt \ll T\left(  \mu^{-1/4} + \frac{\C}{T} \right) + E_{\mu} (2T) +E_{\mu} (T) .
 \end{split}
      \end{equation*}  
\end{lem}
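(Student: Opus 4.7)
The plan is to exploit the cancellation between $H_F$ and $H_G$ arising from $d_F = d_G$ (established earlier in the paper), combined with the oscillation of $m^{it}$. From the definition,
\[ H_F(t,L,\mu) - H_G(t,L,\mu) = \frac{1}{2\pi}\int_{-\infty}^{\infty} \h(L(r-t))\,\Psi(r)\,dr, \]
where $\Psi(r) := \psi_F(r) - \psi_G(r)$ and $\psi_F(r) := 2\log Q_F + \frac{\Gamma'_F}{\Gamma_F}(\tfrac12+ir) + \overline{\frac{\Gamma'_F}{\Gamma_F}}(\tfrac12-ir)$. Applying Stirling's asymptotic to each digamma factor $\frac{\Gamma'}{\Gamma}(\lambda_j(\tfrac12+ir) + \mu_j) = \log(\lambda_j(\tfrac12+ir)+\mu_j) + O(1/|r|)$ and taking real parts yields
\[ \psi_F(r) = d_F\log|r| + \alpha_F + O_F(1/|r|), \qquad |r|\to\infty, \]
for some constant $\alpha_F$ depending on $F$. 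Using $d_F = d_G$, we obtain $\Psi(r) = (\alpha_F - \alpha_G) + O(1/|r|)$, bounded on $\mathbb{R}$, and similarly $\Psi'(r) = O(1/r^2)$.

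After substituting $s = L(r-t)$ and exchanging the order of integration,
\[ \int_T^{2T} L\,m^{it}(H_F - H_G)(t)\,dt = \frac{1}{2\pi}\int \h(s)\,G(s)\,ds, \qquad G(s) := \int_T^{2T} m^{it}\,\Psi(t + s/L)\,dt. \]
Decompose $\Psi = (\alpha_F - \alpha_G) + \tilde\Psi$ with $\tilde\Psi(r) = O(1/|r|)$. The constant part contributes
\[ (\alpha_F - \alpha_G)\cdot\frac{1}{2\pi}\int\h(s)\,ds\cdot\int_T^{2T} m^{it}\,dt = (\alpha_F - \alpha_G)\,\g(0)\cdot O(1/\log m) = O(1), \]
using the Fourier-inversion identity $\int\h = 2\pi\g(0)\asymp 1$ and the trivial bound $\int_T^{2T} m^{it}\,dt = O(1/\log m)$.

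For the $\tilde\Psi$-piece, split the $s$-integral at $|s| = LT/2$. On the tail $|s| > LT/2$, using $|\h(s)|\leq\C/|s|^\mu$ from (\ref{asymp}) and the trivial bound $|G(s)|\ll T\log(2+|s|/L)$, the contribution is $\ll \C(LT)^{1-\mu}T\log T \ll \C$, which is absorbed into $\E(T) + \E(2T)$. On the main range $|s|\leq LT/2$ we have $t+s/L\geq T/2$ throughout $[T,2T]$, and the $T\mu^{-1/4}$ term arises from a Cauchy--Schwarz step against $\|\h\|_2 \asymp \mu^{-1/4}$ (obtained by Parseval from the Beta-function computation $\|\g\|_2^2 \asymp \mu^{-1/2}$), combined with a Plancherel-type bound on $\|G\|_{L^2}$ that uses the bounded support of $\g$.

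The main obstacle is the bookkeeping to produce exactly the stated form $T(\mu^{-1/4} + \C/T) + \E(2T) + \E(T)$. The $T\mu^{-1/4}$ contribution relies crucially on using the $L^2$-norm of $\h$ rather than the larger $L^1$-norm $\asymp \sqrt\mu$; the $\C/T$ and $\E$ terms track the tail of $\h$ and the boundary behaviour at the endpoints $T, 2T$. One must also verify that Stirling's expansion and its derivative are uniform in the (fixed) gamma-factor parameters $\lambda_j(F), \mu_j(F)$ and their analogues for $G$, and justify the interchange of integrations via the super-polynomial decay of $\h$ and the local boundedness of $\Psi$.
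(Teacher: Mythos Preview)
Your route differs from the paper's. The paper integrates by parts in $t$ first: the boundary terms at $t=T,2T$ are, by Lemma~\ref{lem for H} together with $d_F=d_G$, exactly the $\E(T)+\E(2T)$ in the statement. It then computes $L\frac{d}{dt}(H_F-H_G)$ by differentiating under the $r$-integral and integrating by parts in $r$; the surviving integrand is the trigamma contribution, which Stirling bounds by $O(1/l_j(r))$, and the already-proved estimate~(\ref{ O(1/r) error term for h_n}) gives the pointwise bound $\mu^{-1/4}+\C/T$. Integrating that over $[T,2T]$ produces $T(\mu^{-1/4}+\C/T)$. So the paper's proof is essentially a corollary of Lemma~\ref{lem for H} and its proof, with no new estimates needed.

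Your alternative --- swap the $r$ and $t$ integrals, write $\Psi=(\alpha_F-\alpha_G)+\tilde\Psi$ with $\tilde\Psi(r)=O(1/|r|)$, and kill the constant via the oscillation of $m^{it}$ --- is sound in outline and arguably more direct, but the step you flag as ``Cauchy--Schwarz against $\|\h\|_2$ plus a Plancherel-type bound on $\|G\|_{L^2}$ using the bounded support of $\g$'' is a gap as written: $G(s)=\int_T^{2T}m^{it}\tilde\Psi(t+s/L)\,dt$ is not a Fourier transform in any way that lets the support of $\g$ control $\|G\|_2$, and you do not actually produce a bound on $\|G\|_2$. In fact you do not need $L^2$ at all. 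On $|s|\le LT/2$ one has $t+s/L\ge T/2$ throughout, so $\tilde\Psi(t+s/L)\ll 1/T$ and hence $|G(s)|\ll 1$ pointwise; the main-range contribution is then $\ll\|\h\|_{L^1}$, and a splitting at $|s|\asymp\mu$ (trivial bound $|\h|\le\mu^{-1/2}$ inside, asymptotic bound~(\ref{asymp}) outside) gives $\|\h\|_{L^1}\ll\sqrt{\mu}$, comfortably inside $T\mu^{-1/4}$ for the parameter ranges in force. With that replacement your argument goes through; what it buys over the paper's is independence from Lemma~\ref{lem for H}, at the cost of redoing some of its work.
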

Putting our above formula together with the lemma we have   
\begin{equation}{\label{12b}}
    L      \int_{t \in \tilde{\Lb}} m^{it}          \left( H_F (t,L,\mu)-    H_G  (t,L,\mu)       \right) \, dt \ll  T\left(  \mu^{-1/4} + \frac{\C}{T} \right) + E_{\mu} (2T) +E_{\mu} (T) \frac{T}{W}.
\end{equation}
By Cauchy's inequality and (\ref{formula for D}) we have that 
\[  \int_{t \in \Lb }  L    \left| D_F (t,L,\mu)-    D_G  (t,L,\mu)       \right| \, dt      \ll \frac{  T+  O_{\varepsilon} \left( A_{\varepsilon} (L) \right)   }{ \sqrt{W}}.     \]
Also integrating term by term we see that 
\[  \begin{split}
    \int_{T}^{2T}         L  m^{it}  \left( D_F (t,L,\mu)-    D_G  (t,L,\mu)       \right) \, dt  =  T \frac{c(m) }{ \sqrt{m} }\Lambda(m) \g \left( \frac{\log m}{L}\right) +  O\left(  \sum_{u \leq e^L} \frac{ | c(u)| \log u}{\sqrt{u}} \right) .    
  \end{split}   \]
  By similar calculations as done earlier in section \ref{background material}  we can check that  \[   \sum_{u \leq e^L} \frac{ | c(u)| \log u}{\sqrt{u}}= O_{\varepsilon} \left( A_{\varepsilon} (L) \right) ,      \]
  We conclude that 
  \begin{equation}{\label{12c}}
      \begin{split}
          \int_{t \in \tilde{\Lb} }  L   m^{it} \left( D_F (t,L,\mu)-    D_G  (t,L,\mu)       \right) \, dt  = T \frac{c(m) }{ \sqrt{m} }\Lambda(m) \g \left( \frac{\log m}{L}\right) + O \left(   \frac{T}{\sqrt{W}} \right) + O_{\varepsilon} \left(  A_{\varepsilon} (L)  \right).
      \end{split}
  \end{equation}
  Since $c(m)=0$ except on prime powers, we have that $\Lambda(m) \neq 0$.  Putting all information we have collected together (  (\ref{estimate on z 2}), (\ref{H,Z,D}), (\ref{12b}),  (\ref{12c}) ), 
  \begin{equation}{\label{main equation real}}
      \begin{split}
    1& \ll   \frac{c(m) }{ \sqrt{m} }\Lambda(m)=  \frac{ \int_{t \in \tilde{\Lb} }  L  m^{it}  \left( D_F (t,L,\mu)-    D_G  (t,L,\mu)       \right) \, dt + O \left(   \frac{T}{\sqrt{W}} \right) + O_{\epsilon} \left(  A_{\epsilon} (L) \right)  }{T \g \left( \frac{\log m}{L}\right)} \\
    &\ll \frac{1}{T \g \left(\log m/ L \right)}  \bigg(  T\left(  \mu^{-1/4}  + \C/T \right)  + E_{\mu} (2T) +E_{\mu} (T) \frac{T}{W} +  \mu^{-1/2} \Gamma(\mu +1) \left(  2/ L   \right)^ {\mu}\log(T)  \\ 
    &+   T \rho(T)  \int_{|y| \geq L/W \rho(T)} \left|       
 \h( y)    \right| \, dy  +    \frac{T}{\sqrt{W}}  + O_{\varepsilon} \left(  A_{\varepsilon} (L)      \right) \bigg). \end{split}
  \end{equation}
  By Stirling's formula we have $ g_{\mu} \left( \log m/L  \right) \sim   \left( 1-\left(\log m/L   \right)^2    \right)^{\mu-1/2}   $. Using the identity $1-x^2 \geq e^{-2x^2}$ for $|x|<1/2$ , together with the fact $L$ goes to infinity with $T$, for large enough $T$ we have that 
  $  e^{-2 \left(\mu-1/2 \right) \left(  \log m/L \right)^2 } \ll \g \left( \log m/L  \right)   $.\\
  
  We shall get a contradiction in (\ref{main equation real}) to complete the proof as then $c(m)=0$ for all $m$.
  To get a contradiction we shall make the following choices for $L, \mu, W, \varepsilon$. Let $W$ be a sufficiently large constant to be chosen in a moment and let $\varepsilon = 1/  2W^2  $ .   Finally, let $\mu= L/2W\rho(T)$ and $L= W^2 \rho(T) \log (\rho(T))$. Under these choices, for large enough $T$, $1 \ll  \g \left( \log m/L  \right)$. 
  \\
  
For the integral term, 
\begin{equation*}{\label{integral of mod h}}
\begin{split}
    \int_{|y| \geq L/W \rho(T)} \left|       
 \h( y)    \right|  \, dy &\leq \frac{  \Gamma(\mu+1)  }{\sqrt{\mu} } \int_{|y| \geq L/W \rho(T)} \left(\frac{2}{|y|} \right)^{\mu} \, dy 
 \ll \frac{1}{ \sqrt{\mu}}\frac{\Gamma(\mu+1)}{\mu-1} \left( \frac{2}{L/W\rho(T)} \right)^{\mu-1} .
\end{split}
\end{equation*}
Since  $\mu= L/2W\rho(T)$, using Stirling's formula, we obtain 
\begin{equation*}{\label{integral term}}
    \int_{|y| \geq L/W \rho(T)} \left|       
 \h( y)    \right|  \, dy \ll  e^{-\mu} .
\end{equation*}
 This estimate shows that 
$ \rho(T) \int_{|y| \geq L/W \rho(T)} \left|       
 \h( y)    \right|  \, dy \rightarrow 0   $ as $T \rightarrow \infty$ as desired. \\
 For the contribution of zeroes outside $(T-1,2T+1)$,  using the formula for $\mu$ in terms of $L$ and Stirling's formula, we get
 \begin{equation*}
     \begin{split}
      \frac{  \mu^{-1/2} \Gamma(\mu+1) \left(  2/ L   \right)^{\mu} \log(T)}{T}&= \frac{  \Gamma(\mu+1) \left(  1/\mu W\rho(T)   \right)^{\mu} \log(T)}{ \sqrt{\mu} T} \\
      & \ll \frac{  e^{-\mu} \log(T)  }{   (W \rho(T))^{\mu} T}.
            \end{split}
 \end{equation*}
 This converges to $0$ as $T \rightarrow \infty$ as desired. \\
 For the final term, under the choice of $ \varepsilon$, $ A_{\epsilon} (L) /T \rightarrow 0$ as $T \rightarrow \infty$. \\
 By Stirling's formula,  
 \[    \C \ll  \left( \frac{2}{e} \right)^{\mu} \mu^{\mu}.            \]
 Since $ {\mu}^{\mu}/ T \rightarrow 0$ as $T \rightarrow \infty$, we deduce that $ \C/ T \rightarrow 0 $ as $T \rightarrow \infty$. Hence $E_{\mu} (T)$ converges to $1$ as $T \rightarrow \infty$. 
Putting this all together, taking $T$ to be sufficiently large (\ref{main equation real}) implies that 
\[ 1 \ll \frac{1}{ \sqrt{W}} ,                  \]
which is clearly contradicted by choosing sufficiently large $W$, completing the proof. 
 
 \section{Proofs of lemmas}
 Recall our definitions of $\C$, $E_{\mu}(T)$  from Lemma \ref{lem for H}.  Also recall that $t \in [T,2T]$, $\mu \asymp \log \rho(T) $ , $L \asymp  \rho(T) \log \rho(T) $ .    We shall remove any $F$ or $G$ subscripts to make it more concise.
\begin{proof}[Proof of Lemma {\ref{lem for H}}]
 Looking at the integral formula for $H(t,L,\mu)$.  For the constant term in the integral, by the Fourier inversion formula  
  \begin{equation}{\label{integral of h_n (L(r-t))}} \int_{-\infty}^{\infty}        \h (L(r-t)) \, dr =  \frac{1}{L} \g(0)       .         \end{equation}
   Denote $ \left| \lambda_j (\frac{1}{2}+ir) + \mu_j      \right|  $ as $l_j (r)$. Since $l _j (r) $ is uniformly bounded away from $0$ for $r \in \mathbb{R}$ (by assumptions on $\lambda_j, \mu_j$) , Stirling's formula implies the digamma term is equal to 
\begin{equation}{\label{stirlings formuula for H}}
\begin{split}
   & \sum_{j=1}^{N} 2 \lambda_j \int_{-\infty}^{\infty} \h(L(r-t)) 
\Re   \log \left( \lambda_j (\frac{1}{2}+ir) + \mu_j            \right) \, dr +   \sum_{j=1}^{N} \int_{-\infty}^{\infty} O\left(\frac{1}{  l_j (r)} \right)  \left| \h(L(r-t))  \right| \, dr  \\  
&=   \sum_{j=1}^{N} 2 \lambda_j \int_{-\infty}^{\infty} \h(L(r-t)) 
   \log \left(  l_j(r)          \right) \, dr  
   +    \sum_{j=1}^{N} \int_{-\infty}^{\infty}  O\left(\frac{1}{  l_j (r)} \right)  \left| \h(L(r-t))  \right| \, dr .
   \end{split}
\end{equation}
For the error term, using the trivial estimate $ |\h(u)| \leq  \h(0)= 1/ \sqrt{\mu}$, and splitting the integral up into suitable  regions, we have 
\begin{equation*}
\begin{split}
     \int_{-\infty}^{\infty}  \frac{1}{  l_j (r)} \left| \h(L(r-t))  \right| \, dr & \ll 
          \frac{1}{L}   \left( \frac{1}{\mu^{1/4}}  + \int_{|u|>\mu^{1/4}}  \frac{1}{  l_j (\frac{u}{L} +t)}    \left| \h(u)  \right| \, du                         
                      \right).
     \end{split}
\end{equation*}
When  $ u > \mu^{1/4} $, $ 1/   l_j (\frac{u}{L} +t)\ll 1/ \left( \frac{u}{L} +t    \right) \leq 1/t \leq 1/T    $, so 
 \[  \int_{u>\mu^{1/4}}  \frac{1}{  l_j (\frac{u}{L} +t)}    \left| \h(u)  \right| \,du  \ll \frac{\C}{ T \left( \mu^{1/4} \right)^{\mu-1} (\mu-1) }  \ll \frac{\C}{T} .  \]
 We still need to consider the integral when $ u\leq - \mu^{1/4}$. We would like to use the estimate  \\ $1/ l_j (\frac{u}{L} +t)\ll 1/ \left|\frac{u}{L} +t    \right|$, however it is not useful in a neighbourhood of $ -tL$ (which  definitely lies in this region by our choice of $\mu$, $L$)  . In this region we use $1/l_j(\frac{u}{L}+t)= O(1)$. Let $a= LT/2 $. We have the estimate 
 \[  \int_{-tL -  a}^{-tL + a}    \frac{1}{  l_j (\frac{u}{L} +t)}    \left| \h(u)  \right| \,du     \ll  \frac{\C}{ (\mu-1) \left( Lt-a \right)^{\mu-1}} \ll \frac{\C}{(\mu-1) a^{\mu-1}}\ll  \frac{\C}{T} .            \] 
 On $ ( -Lt+ a , - \mu^{1/4} )   $, we have  $ 1 / \left|\frac{u}{L} +t    \right| \ll 1/T$, so 
 \[      \int_{-tL + a}^{ - \mu^{1/4}}    \frac{1}{  l_j (\frac{u}{L} +t)}    \left| \h(u)  \right| \,du  
 \ll \frac{\C}{ T (\mu^{1/4} )^{ \mu-1} (\mu-1)}  \ll   \frac{\C}{T}  .       \]
 Finally, by the same analysis,
 \[     \int_{-\infty}^{ -tL-a }    \frac{1}{  l_j (\frac{u}{L} +t)}    \left| \h(u)  \right| \, du \ll \frac{ \C}{ T (\mu-1) \left( tL+a     \right)^{\mu-1} \ll       }   \ll \frac{ \C}{ T (\mu-1) \left( 3TL/2     \right)^{\mu-1}      }  \ll  \frac{\C}{T}  . \]
 Putting all of our analysis together, we see that 
 \begin{equation}{\label{ O(1/r) error term for h_n}}
 \begin{split}
     \int_{-\infty}^{\infty}  \frac{1}{  l_j (r)} \left| \h(L(r-t))  \right| \, dr &\ll \frac{1}{L}   \left(   \frac{1}{\mu^{1/4}} + \frac{\C}{T} \right).
 \end{split}
     \end{equation}

     We have dealt with the error term in (\ref{stirlings formuula for H}). Now for the first term, 
notice that $l_j(T) \sim \lambda_j T  $, so 
when $r \in [T- \sqrt{T}, 2T+ \sqrt{T}]$ we have $\log l_j(r) = \log T + O(1)  $.  
Fixing $j$ and recalling $t \in [T,2T]$, we have 
\begin{equation}{\label{integral of h times l}}
    \begin{split}
        \int_{-\infty}^{\infty} \h(L(r-t)) 
   \log \left(  l_j(r)        \right) dr& =    \int_{t- \sqrt{T}}^{t+ \sqrt{T}} \h(L(r-t)) \left(  \log T +O(1)   \right) \, dr \\
   &+ \int_{ | r-t| > \sqrt{T}} \h  ( L(r-t))   \log l_j(r) \, dr.
   \end{split}
   \end{equation}
   Notice  that    when $|u|> L \sqrt{T}$, we have $\log l_j( \frac{u}{L}+t ) \ll \log(|u|)$ since $ \sqrt{t} \ll u$. Integrating by parts we have  
   \[   \int_{  |u|  > L \sqrt{T}} \h  ( u)   \log l_j \left( \frac{u}{L}+t \right) \, du \ll \frac{\C}{ (\mu-1) \left(  L \sqrt{T} \right)^{\mu-1} } \left(  \log (L \sqrt{T}) +\frac{1}{\mu-1}     \right)  .  \] This implies 
   \[    \int_{ | r-t| > \sqrt{T}} \h  ( L(r-t))   \log l_j(r) \, dr \ll   \frac{\C}{LT}.    \]
   For the error term in the first integral in (\ref{integral of h times l}), 
  \[  \int_{t- \sqrt{T}}^{t+ \sqrt{T}} \h(L(r-t)) O(1)   \, dr   \ll \frac{\C}{ L (\mu-1) (L \sqrt{T}) ^{\mu-1}} \ll     \frac{\C}{LT}       .         \]
   This also implies 
   \[   \frac{\log (T)}{2 \pi}\int_{t-\sqrt{T}}^{t+ \sqrt{T}} \h(L(r-t)) \,dr =  \frac{\log(T)}{L} \left( \g (0) + O\left(  \frac{\C}{(\mu-1) (L \sqrt{T}) ^{\mu-1}}     \right)     \right)                     . \] 
  
   Putting this all together,  
   \begin{equation}{\label{integral of h times log l}}
   \begin{split}
       \frac{1}{2 \pi} \int_{-\infty}^{\infty} \h(L(r-t)) 
   \log \left(  l_j(r)        \right) \, dr&= \frac{\log (T)}{L}  \g (0)   
   +  \frac{1}{L} O \left(  \frac{\C}{T}  
   \right).
   \end{split}
     \end{equation}
     Using the fact $\g (0)= O(1)$ , (\ref{integral of h_n (L(r-t))}), (\ref{stirlings formuula for H}) , (\ref{ O(1/r) error term for h_n}) and (\ref{integral of h times log l}) show that  
     \begin{equation*}
     \begin{split}
         H_F (t,L,\mu) =  \g (0) \frac{ d_F \log(T)}{L} + &\frac{1}{L} O \left(   E_{\mu}(T) \right).
      \end{split}
     \end{equation*} 
   \end{proof}
   \begin{proof}[Proof of Lemma \ref{lem for derivative term}]
   Integrating by parts and using $d_F=d_G$, which follows from \eqref{primes of E} and \cite{Sound},  Lemma \ref{lem for H} implies  that
   \[  \begin{split}
       \int_{T}^{2T} L m^{it} \left( H_F (t,L,\mu) - H_G (t,L,\mu) \right) \, dt & =        - \frac{1}{ i \log m} \int_{T}^{2T} L \frac{d}{dt} \left( H_F (t,L,\mu) - H_G (t,L,\mu)  \right)   m^{it} \, dt \\
       &+ O \left( \E (2T) +\E(T)  \right).  \\
   \end{split}     \]
   Hence to complete the proof we just need to estimate $L  \frac{d}{dt} \left( H_F (t,L,\mu) - H_G (t,L,\mu)  \right) $. Differentiating under the integral expression of $ H_F (t,L,\mu) - H_G (t,L,\mu) $ and  integrating by parts again, we get 
   \begin{equation*}
   \begin{split}
      &  L  \frac{d}{dt} \left( H_F (t,L,\mu) \right)=  \\ & -  \left(  2 \log Q_F + \frac{\Gamma'_F}{\Gamma_F} \left( \frac{1}{2}+ir \right) + \frac{\overline{\Gamma'_F}}{\overline{\Gamma_F}} \left( \frac{1}{2}- ir \right)     \right) L \h (L(r-t))        \Bigg|_{r=-\infty}^{\infty}  \\
  & +  iL \int_{-\infty}^{\infty} \h (L(r-t)) \left(   \left(  \frac{\Gamma'_F}{\Gamma_F}\right)'  \left( \frac{1}{2}+ir \right)     -  \left(  \frac{\overline{\Gamma'_F}}{\overline{\Gamma_F}} \right)' \left( \frac{1}{2}- ir \right) \right)    \,  dr .
   \end{split}     \end{equation*}
   By Stirling's formula and our asymptotic estimate on $\h$, the first term is  zero. By Stirling's formula for the trigamma function, the trigamma terms in the integral are  $O(1/l_j(r))$ . Using (\ref{ O(1/r) error term for h_n}), we have that the integral term is $ \ll  \mu^{-1/4} + \C/T $, 
     thus completing our proof. 
   
   \end{proof}
 

\providecommand{\bysame}{\leavevmode\hbox to3em{\hrulefill}\thinspace}
\providecommand{\MR}{\relax\ifhmode\unskip\space\fi MR }
\providecommand{\MRhref}[2]{%
  \href{http://www.ams.org/mathscinet-getitem?mr=#1}{#2}
}
\providecommand{\href}[2]{#2}


  \textbf{School of Mathematics, University of Bristol, Bristol, BS8 1UG, United Kingdom }
  \\
 \textbf{Email address: michaelfarmer868@gmail.com}

\end{document}